\newcommand{\Z}{\mathbb{Z}}
\newcommand{\R}{\mathbb{R}}
\newcommand{\C}{\mathbb{C}}
\def\beq{\begin{equation}}
\def\eeq{\end{equation}}
\def\arr{\hbox to 20pt{\rightarrowfill}}
\def\Sl{\mathrm {Sl}} 
\def\PSl{\mathrm {PSl}} 
\def\SO{\mathrm {SO}} 
\def\SU{\mathrm {SU}}
\def\D{\mathcal D}
\def\W{\mathcal W}
\def\L{\mathcal L}
 \newenvironment{res} 
               {\begin{equation} 
\begin{minipage}{0.85\textwidth}} 
               { \end{minipage}\end{equation} } 
\def\ber{\begin{res} } 
\def\eer{\end{res}} 
\numberwithin{equation}{section} 
\newtheorem{thm}{Theorem}[section]
\newcommand{\oline}{\overline}
\newtheorem{lem}[thm]{Lemma}
\newtheorem{cor}[thm]{Corollary} 
\newtheorem{prop}[thm]{Proposition} 
\newtheorem{df}[thm]{Definition}
\def\section{\@startsection {section}{1}{\z@}{3.5ex plus 1ex minus 
    .2ex}{2.3ex plus .2ex}{\large\bf}} 
    \def\subsection{\@startsection{subsection}{2}{\z@}{3.25ex plus 1ex minus 
 .2ex}{1.5ex plus .2ex}{\bf}} 
\def\bysame{\leavevmode\hbox to3em{\hrulefill}\,} 
\def\Ad{\operatorname{Ad}}
\def\vf{\mathfrak{v}}
\def\zf{\mathfrak{z}}
\def\af{\mathfrak{a}}
\def\sf{\mathfrak{s}} 
\def\Cr{\operatorname {Cr}}
\def\diag{\operatorname {diag}}
\def\gf{\mathfrak{g}}
\def\kf{\mathfrak{k}} 
\def\nf{\mathfrak{n}}
\def\id{\operatorname{id}}
\def\L{\mathcal{L}} 
\def\P{\mathbb{P}}
\def\W{\mathcal{W}}
\def\bs{\backslash}
\renewcommand{\Re}{\mbox{\rm Re}\,} 
\renewcommand{\Im}{\mbox{\rm Im}\,} 
\begin{document} 
\title[Homogeneous harmonic spaces]
{The complex crown for homogeneous harmonic spaces}

\author{Roberto Camporesi  and Bernhard Kr\"otz}

\address{Dipartimento di Matematica\\
Politecnico di Torino\\
Corso Duca degli Abruzzi 24\\ 10129 Torino\\\
	 e-mail: camporesi@polito.it}

\address{Leibniz Universit\"at Hannover\\ Institut f\"ur Analysis\\
Welfengarten 1\\ D-30167 Hannover
\\email: kroetz@math.uni-hannover.de}

\date{\today} 
\thanks{}
\maketitle

\section{Introduction}

Let $X$ be a simply connected homogeneous harmonic Riemannian space. Then according to \cite{He}, 
Corollary 1.2, $X$ is isometric (up to scaling of the metric) to one of the following spaces:

\begin{enumerate}
\item $\R^n$.
\item $S^n$, $P^k(\C)$, $P^l(\mathbb{H})$, or $P^2(\mathbb{O})$, i.e., a compact rankone symmetric space. 
\item $H^n(\R)$, $H^k(\C)$, $H^l(\mathbb{H})$, or $H^2(\mathbb{O})$, i.e., a noncompact rankone symmetric space. 
\item a solvable Lie group $S=A \ltimes N$ where $N$ is of Heisenberg-type and 
$A\simeq \R^+$ acts on $N$ by anisotropic dilations preserving the grading.
\end{enumerate}
 
We denote by $\L$ the Laplace-Beltrami operator on $X$. In case (i),  $\L$-eigenfunctions on $\R^n$ 
extend to holomorphic functions on $\C^n$.  Likewise, in case (ii), $\L$-eigenfunction on $X=U/K$ 
admit holomorphic continuation to the whole affine  
complexification $X_{\C}=U_\C/ K_\C$ of $X$. This is no longer true
in case (iii).   However in this case, 
and more generally for a noncompact Riemannian
symmetric space of any rank $X=G/K$,
there exists a $G$-invariant domain $\Cr(X)$
of $X_{\C}=G_\C/ K_\C$ containing $X$, the {\it complex crown},  
with the following property (\cite{KSt}, \cite{KS}):

{\em every $\L$-eigenfunction on $X$ admits a holomorphic extension to $\Cr(X)$, and this domain is maximal for this property}.

The objective  of this paper is to obtain analogous theory for the spaces in (iv) above.

\par We note that all spaces in (iii), except for $H^n(\R)$, fall into class (iv) by identifying the
symmetric space $X=G/K$ with the $NA$-part in the Iwasawa decomposition $G=NAK$ of a noncompact
simple Lie group $G$ of real rank one, and by suitably scaling the metric. 

\par Our investigations start with a new model of the crown domain
for the rankone symmetric spaces $X$. We describe $\Cr(X)$ in terms of the Iwasawa coordinates $A$ and $N$ only; 
henceforth we refer to this new model  as the {\em mixed model} of the crown.
In Section 2 we provide the mixed model for the two  basic cases, i.e.
the symmetric spaces associated with the groups $G=\Sl(2,\R)$ and $G=\SU(2,1)$. 

\par Starting from the two basic cases, reduction of symmetry allows to obtain a mixed model for all 
rank one symmetric spaces and motivates a definition of $\Cr(S)$ for the remaining 
spaces in (iv).  This is worked out in Section 3. 
\par Finally, in section 4 we use recent results from \cite{KS} to prove holomorphic extension 
of $\L$-eigenfunctions on $S$ to the crown domain $\Cr(S)$ and establish 
maximality of $\Cr(S)$ with respect to this property.

\section{Mixed model for the crown domain}

The crown domain can  be realized inside the complexification of 
an Iwasawa $AN$-group. Goal of this section is to make 
this explicit for the rank one groups $\Sl(2,\R)$ and 
$\SU(2,1)$.

\subsection{Notation for rank one spaces}

Let $G$ be a connected semi-simple Lie group of 
real rank one. We assume that $G\subset G_\C$ where $G_\C$ is 
the universal complexification of $G$. 
We fix an Iwasawa decomposition $G=NAK$ and form 
the Riemannian symmetric space 
$$X=G/K\, .$$
With $K_\C<G_\C$ the universal complexification of $K$  
we arrive at a totally real embedding 
$$X\hookrightarrow X_\C:= G_\C/ K_\C, \ \ gK\mapsto gK_\C\, .$$
Let $x_0=K\in X$ be a base-point. 

\par Let $\gf, \kf, \af$ and $\nf$ be the Lie algebras 
of $G, K, A $ and $N$. Let $\Sigma^+=\Sigma(\af, \nf)$ be the set of positive 
roots and put 

$$\Omega:=\{Y\in \af\mid  \forall \alpha\in \Sigma^+\ 
|\alpha(Y)|< \pi/2\}\, .$$
Note that $\Omega$ is a symmetric interval in $\af\simeq \R$. 
The crown domain of $X$ is defined as 

\begin{equation} \label{p=e} 
\Cr(X):=G\exp(i\Omega)\cdot x_0\subset X_\C\, . \end{equation}
Let us point out that $\Omega$ is invariant under the Weyl group 
$\W=N_K(A)/Z_K(A)\simeq \Z_2$ and  
that $\exp(i\Omega)$ consists of elliptic elements 
in $G_\C$. We will refer to (\ref{p=e}) as the {\it elliptic model} 
of $\Cr(X)$ (see \cite{KSt} for the basic structure theory in these coordinates).

\par Let us define a domain $\Lambda$ in $\nf$ by 

$$\Lambda:=\{ Y\in \nf\mid \exp(iY)\cdot x_0\subset \Cr(X)\}_0$$
where $\{\cdot\}_0$ refers to the connected component 
of $\{\cdot\}$ which contains $0$.

\par The set $\Lambda$ is explicitly determined in \cite{KO}, Th. 8.11.
Further by \cite{KO}, Th. 8.3: 

\begin{equation} \label{p=u} \Cr(X)=G\exp(i\Lambda)\cdot x_0\, .
\end{equation}
We refer to (\ref{p=u}) as the {\it unipotent model} 
of $\Cr(X)$.

Finally let us mention the fact that $\Cr(X)\subset N_\C A_\C \cdot x_0$
which brings us to the question whether $\Cr(X)$ can be expressed in terms 
of $A, N$, $\Omega$ and $\Lambda$. 
This is indeed the case and will be considered in 
the following two subsections for the groups $\Sl(2,\R)$ and 
$\SU(2,1)$.

\subsection{Mixed model for the upper half plane}

Let 
$$G=\Sl(2,\R) \quad \hbox{and}\quad G_\C= \Sl(2,\C)\, .$$
Our choices of $A, N$ and $K$ are as follows: 

\begin{align*} A &=\left \{ a_t=\begin{pmatrix} t & 0 \\ 0 & 1/t
\end{pmatrix}\mid t>0\right\}\, ,\\
A_\C &=\left \{ a_z=\begin{pmatrix} z & 0 \\ 0 & 1/z
\end{pmatrix}\mid z\in \C^*\right\}\, ,\end{align*}
$$K=\SO(2,\R) \quad \hbox{and} \quad K_\C=\SO(2, \C)\, ,$$ 
and 
\begin{align*} N &=\left \{ n_x=\begin{pmatrix} 1 & x \\ 0 & 1
\end{pmatrix}\mid x\in\R\right\}\, ,\\
N_\C &=\left \{ n_z=\begin{pmatrix} 1 & z \\ 0 & 1
\end{pmatrix}\mid z\in \C\right\}\, . \end{align*}

We will identify $X=G/K$ with the 
upper halfplane ${\bf H}=\{ z\in \C\mid \Im z>0\}$ via the map 

\beq \label{e1} X\to {\bf H}, \ \ gK\mapsto {a i +b \over ci +d} \qquad 
\left(g=\begin{pmatrix} a & b\\ c & d \end{pmatrix}\right)\, .\eeq
Note that $x_0=i$ 
within our identification. 

\par We view $X={\bf H}$ inside of the complex projective 
space $\P^1(\C)=\C\cup\{\infty\}$ and note that 
$\P^1(\C)$ is homogeneous for $G_\C$ with respect 
to the usual fractional linear action: 

$$g(z)= {a z +b \over cz +d}\qquad \left(z\in \P^1(\C), 
g=\begin{pmatrix} a & b\\ c & d \end{pmatrix}\in G_\C\right)\, .$$  

\par We use a more  concrete model for  $X_\C=G_\C/K_\C$, namely 
$$X_\C\to \P^1(\C)\times \P^1(\C)\bs{\diag}, \ \ gK_\C\mapsto \left(g(i), g(-i)\right)  $$
which is a $G_\C$-equivariant diffeomorphism. With this 
identification of $X_\C$ the embedding of (\ref{e1}) becomes 
\begin{equation}\label{e2}  X\hookrightarrow X_\C, \ \ z\mapsto (z,\oline z) \, .\end{equation}

\par We will denote by $\oline X$ the lower half plane and note that 
 the {\it crown domain} for $\Sl(2,\R)$ is given by: 
$$\Cr(X)= X\times \oline X\, .$$

We note that 
$$\Omega=\left\{ \begin{pmatrix} x & 0 \\ 0 & -x\end{pmatrix}\mid 
x\in(-\pi/4, \pi/4)\right\}\, , $$
and 
$$\Lambda=\left\{\begin{pmatrix} 0 & x \\ 0 & 0\end{pmatrix}\mid x\in (-1,1)
\right\}\, .$$

\par With that we come to the mixed model for the crown which combines 
both parameterizations in an unexpected way. 

We let $F:=\{\pm {\bf 1}\}$ be the center of $G$ and note that 

$$N_\C A_\C \cdot x_0= \C \times \C \bs \diag$$
and 
$$N_\C A_\C \cdot x_0 \simeq N_\C A_\C / F\, .$$

\begin{prop} \label{sl} Let $G=\Sl(2,\R)$. Then the map 
$$ NA \times \Omega\times \Lambda\to \Cr(X), \ \ (na, H, Y)\mapsto 
na\exp(iH)\exp(iY)\cdot x_0$$
is an $AN$-equivariant  diffeomorphism. 
\end{prop}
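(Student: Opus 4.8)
The plan is to reduce the statement to an explicit computation in the coordinates furnished by the diffeomorphism $X_\C\cong\P^1(\C)\times\P^1(\C)\bs\diag$, $gK_\C\mapsto(g(i),g(-i))$. Writing a general element of the domain as $na=n_xa_t$ ($x\in\R$, $t>0$), $\exp(iH)=a_{e^{i\theta}}$ ($\theta\in(-\pi/4,\pi/4)$) and $\exp(iY)=n_{is}$ ($s\in(-1,1)$), I would first multiply out the matrix $g=n_xa_ta_{e^{i\theta}}n_{is}$ and apply the fractional linear action. Setting $\zeta:=t^2e^{2i\theta}$, the computation should collapse to the compact formulas
$$ g(i)=x+i\zeta(1+s),\qquad g(-i)=x-i\zeta(1-s). $$

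Two preliminary observations make the structure transparent. First, $(t,\theta)\mapsto\zeta=t^2e^{2i\theta}$ is, in polar form, a diffeomorphism of $(0,\infty)\times(-\pi/4,\pi/4)$ onto the open right half-plane $\{\Re\zeta>0\}$, because $\cos2\theta>0$ on the interval. Second, taking imaginary parts gives $\Im g(i)=t^2(1+s)\cos2\theta>0$ and $\Im g(-i)=-t^2(1-s)\cos2\theta<0$, since $1\pm s>0$; hence the image lies in $X\times\oline X=\Cr(X)$ and the map is well defined into the crown.

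For bijectivity I would exhibit the inverse in closed form. Given $(w_1,w_2)\in X\times\oline X$, subtracting the two formulas yields $\zeta=(w_1-w_2)/2i$, whose real part equals $\tfrac12(\Im w_1-\Im w_2)>0$; thus $\zeta$ lies in the right half-plane and recovers $(t,\theta)$ uniquely by the first observation. Adding the two formulas yields $\mu:=(w_1+w_2)/2=x+i\zeta s$; writing $\zeta=p+iq$ with $p>0$ and separating real and imaginary parts gives $s=\Im\mu/p$ and $x=\Re\mu+qs$, uniquely. The only point needing attention is the constraint $s\in(-1,1)$: one computes $s=(\Im w_1+\Im w_2)/(\Im w_1-\Im w_2)$, which lies in $(-1,1)$ precisely because $\Im w_1>0>\Im w_2$. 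All these inverse formulas are manifestly smooth on $\Cr(X)$, so the map is a diffeomorphism onto $\Cr(X)$.

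Finally, $AN$-equivariance is immediate from the definition: for $g'\in AN$ one has $g'na\exp(iH)\exp(iY)\cdot x_0=g'\cdot(na\exp(iH)\exp(iY)\cdot x_0)$, and $AN\subset G$ acts on $\Cr(X)$ through the holomorphic left $G$-action. I expect the only real subtlety to be the matching of ranges in the inversion step: verifying that the two open conditions $\Re\zeta>0$ and $s\in(-1,1)$ together cut out exactly $X\times\oline X$, neither more nor less, which is precisely what makes this ``mixed'' parametrization a bijection onto the whole crown.
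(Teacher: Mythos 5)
Your computation is correct, and your closed formula checks out: with $g=n_xa_ta_{e^{i\theta}}n_{is}$ one indeed gets $g(u)=\zeta u+i\zeta s+x$ for $\zeta=t^2e^{2i\theta}$, hence $\bigl(g(i),g(-i)\bigr)=\bigl(x+i\zeta(1+s),\,x-i\zeta(1-s)\bigr)$, and your inversion formulas and range checks ($\Re\zeta>0$, $s=(\Im w_1+\Im w_2)/(\Im w_1-\Im w_2)\in(-1,1)$) are all valid. The paper works in the same model $X_\C\cong\P^1(\C)\times\P^1(\C)\bs\diag$ but argues differently: it only verifies that the map is defined and onto, building up the image orbit by orbit (first $A\exp(i\Lambda)\cdot x_0=i\R^+\times(-i\R^+)$, then rotating by $\exp(i\Omega)$ to sweep out the pairs with $\arg w=\pi+\arg z$, then translating by $N$ to fill $\Cr(X)$), and delegates injectivity and smoothness of the inverse to ``the facts listed above'' (essentially that $N_\C A_\C$ acts on $N_\C A_\C\cdot x_0$ with kernel $F=\{\pm{\bf 1}\}$, together with uniqueness of the $N_\C A_\C$ decomposition). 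Your route is more self-contained: by exhibiting the inverse in closed form you get bijectivity and the diffeomorphism property in one stroke, at the cost of a slightly longer computation; the paper's layered surjectivity argument is the one that generalizes more directly to the $\SU(2,1)$ case treated next, where no such clean global inverse formula is written down.
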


\begin{proof} By the facts listed above we only have to show 
that the map is defined and onto. For that 
we first note: 

$$\exp(i\Lambda)\cdot x_0=\{ ((1+t)i , -(1-t)i)\mid t\in (-1,1)\}$$
and thus 

$$A\exp(i\Lambda)= i\R^+ \times - i\R^+\, .$$
Consequently 

$$A\exp(i\Omega)\exp(i\Lambda)\cdot x_0= \{ (z,w)\in \Cr(X)\mid \arg (w)=
\pi + \arg(z)\}$$
and finally 

$$NA \exp(i\Omega)\exp(i\Lambda)\cdot x_0=\Cr(X)$$
as asserted. 
\end{proof}

\subsection{Mixed model for $\SU(2,1)$}

Let $G=\SU(2,1)$.  We let 
$G$ act on $\P^2(\C)=(\C^{3}\setminus\{0\}/ \sim)$
by projectivized linear transformations. 
We embed $\C^2$ into $\P^2(\C)$ via 
$z\mapsto [z,1]$. Then $G$ preserves the ball 
$$X=\{ z\in \C^2\mid \|z\|_2<1\}\simeq G/K $$
with the maximal compact subgroup $K=S(U(2)\times U(1))$
stabilizing the origin $x_0=0\in X$. 
Note that an element 
$$g=\begin{pmatrix} A & u \\ v^t& \alpha\end{pmatrix}\in G$$
with $u,v\in\C^2$ acts on $z\in X$ by 

$$g(z)= {Az +u \over v^t\cdot z+\alpha}\, .$$ 

Now,  as $X$ is Hermitian, the crown is 
given by the double 
$$\Cr(X)= X\times X$$
but with $X$ embedded in $\Cr(X)$ as $z\mapsto (z,\oline z)$. 
We choose 
$$A=\left \{ a_t:=\begin{pmatrix} 
\cosh t &0 & \sinh t \\ 0 & 1 & 0\\ 
\sinh t &0 & \cosh t \end{pmatrix}\mid t\in \R\right\}\, .$$
Set $Y_t:=\log a_t$ and note that 

$$\Omega=\{Y_t\mid -\pi/4< t < \pi/4\}\, .$$

According to \cite{KO}, Th. 8.11 we have 

\begin{align*}\Lambda=\Big\{ &Z_{a,b}:=\begin{pmatrix}   ib  &  a & -ib \\
-\oline a  &0  & \oline a\\  ib  &  a & -ib\end{pmatrix}
\mid a\in \C, b\in \R; \\
&|a|^2 +|b|<1/2\}\, .\end{align*}

We define now a subset $\D\subset \af \times \nf $ by 

$$\D:=\{ (Y_t, Z_{a,b})\in \af \times \nf \mid 
(1 - 2|a|^2 - 2|b|) \cos (2 t) > (1-\cos (2t)) |a|^2 \}_0\,.$$
If $\pi_\af: \af\times \nf  \to \af$ denotes 
the first coordinate projection and $\pi_\nf$ resp. the second, 
then note the following immediate facts: 
\begin{itemize}
\item $\D\subset \Omega\times \Lambda,$
\item $\pi_\af (\D) = \Omega$, 
\item $\pi_\nf(\D)= \Lambda$. 
\end{itemize}

\begin{prop} \label{su} Let $G=\SU(2,1)$. Then the map 
$$\Phi: NA\times \D \to \Cr(X), \ \ (na, (Y, Z))
\mapsto na \exp (iY) \exp(i Z)\cdot x_0 $$
is a diffeomorphism. 
\end{prop}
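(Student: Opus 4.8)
The plan is to follow the template of Proposition \ref{sl}. Since $\exp(iY)\in A_\C$ and $\exp(iZ)\in N_\C$, the map $\Phi$ takes values in $N_\C A_\C\cdot x_0\subset X_\C$, and I expect injectivity and the local--diffeomorphism property to be inherited from the complex structure exactly as in the $\Sl(2,\R)$ case. Concretely, rewriting $a\exp(iY)\exp(iZ)=\exp\bigl(i\,\Ad(a\exp(iY))Z\bigr)\,a\exp(iY)$ exhibits $\Phi(na,(Y,Z))$ in the decomposition $N_\C\cdot A_\C$. The map $N\times\nf\to N_\C$, $(n,Z)\mapsto n\exp(iZ)$, is a global diffeomorphism (as $N_\C$ is simply connected nilpotent with $\nf_\C=\nf\oplus i\nf$), and $A\times\Omega\to A_\C$, $(a,Y)\mapsto a\exp(iY)$, is an embedding because $|\alpha(\cdot)|<\pi/2$ on $\Omega$ rules out the periodicity of $\exp$ in the $A_\C\simeq\C^\ast$ direction. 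Composing with the orbit map $N_\C A_\C\to N_\C A_\C\cdot x_0$, a local biholomorphism with finite central fibres as in Proposition \ref{sl}, shows that $\Phi$ is an injective immersion; since $\dim(NA\times\D)=8=\dim_\R\Cr(X)$, it is an injective local diffeomorphism onto an open subset of $\Cr(X)$. Thus only two points remain: that $\Phi$ is well defined, i.e.\ maps into $\Cr(X)$, and that it is onto.

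For well-definedness I would compute the slice explicitly. Since $Z_{a,b}^{3}=0$ with $Z_{a,b}^{2}=|a|^{2}M$ for an explicit rank-two matrix $M$, one gets $\exp(iZ_{a,b})=I+iZ_{a,b}-\tfrac{|a|^{2}}{2}M$, while $\exp(iY_{t})$ is the elementary matrix with entries $\cos t$ and $i\sin t$. Under the identification $\Cr(X)=X\times X$, the two factors of $g\cdot x_0$ are recovered as the ball point $g\cdot x_0$ (read off the third column of $g$) and its polar hyperplane $g\cdot H_0$, equivalently the conjugate ball point (read off the third row of $g^{-1}$). A direct computation then shows that the conditions ``first factor in the ball'' and ``second factor in the ball'' are respectively
\[
(1+2b-|a|^{2})\cos(2t)>|a|^{2}\qquad\text{and}\qquad(1-2b-|a|^{2})\cos(2t)>|a|^{2},
\]
whose conjunction, since $\cos(2t)>0$ on $\Omega$, is $(1-|a|^{2}-2|b|)\cos(2t)>|a|^{2}$, precisely the inequality defining $\D$. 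Hence $\Phi$ maps $\D$ into $\Cr(X)=X\times X$, and, conversely, the slice point reaches $\partial\Cr(X)$ exactly as $(Y,Z)\to\partial\D$.

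The main obstacle is surjectivity, which I would deduce from properness together with the boundary information just established. As an injective local diffeomorphism, $\Phi$ has open image in the connected manifold $\Cr(X)$, so it suffices to show $\Phi$ is proper; then the image is also closed and $\Phi$ is a diffeomorphism onto $\Cr(X)$. Here I would use that $NA\subset G$ acts by isometries on the ball $X$ and simply transitively on it (the Iwasawa model $X\simeq NA$): if $(g_k,(Y_k,Z_k))$ leaves every compact set while $\Phi(g_k,(Y_k,Z_k))$ stays in a compact $K\Subset\Cr(X)$, then either $(Y_k,Z_k)\to\partial\D$, forcing one of the two ball-coordinates of the slice point to $\partial X$, or $g_k\to\infty$ in $NA$, which carries compact subsets of $X$ to $\partial X$ and hence drives a coordinate of $\Phi(g_k,\cdots)$ to $\partial X$; either way this contradicts $K\Subset\Cr(X)$. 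The delicate point, and the reason the two separate inequalities above are needed rather than a single one, is to track which of the two ball-coordinates degenerates along each boundary stratum of $\D$ so that this dichotomy is exhaustive; granting this bookkeeping, properness follows and $\Phi$ is a diffeomorphism.
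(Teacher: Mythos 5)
Your verification that $\Phi$ is well defined follows the paper's route: compute the slice point $\exp(itH)\exp(iZ_{a,b})\cdot x_0$ explicitly (after using $M$-invariance to make $a$ real) and translate membership in $\Cr(X)=X\times X$ into an inequality in $(a,b,t)$. You are in fact more careful than the paper here, since you write out both factor conditions and observe that their conjunction over $\pm b$ is what produces the $|b|$ in the definition of $\D$; the paper computes only one factor and leaves the second implicit. (A minor quibble with your injectivity sketch: $\Ad(a\exp(iY))Z$ lies in $\nf_\C$, not in $\nf$, so the diffeomorphism $N\times\nf\to N_\C$, $(n,W)\mapsto n\exp(iW)$, does not apply to $n\exp\bigl(i\Ad(a\exp(iY))Z\bigr)$ as directly as you suggest; but the paper also takes injectivity for granted, so this is not the main issue.)

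The genuine gap is in surjectivity. The paper avoids properness entirely: since $\Cr(X)\subset N_\C A_\C\cdot x_0$ and $\Cr(X)$ is $NA$-invariant and connected, one has $\Cr(X)=NA\,\{\exp(iY)\exp(iW):(Y,W)\in\D'\}\cdot x_0$ for some domain $\D'\supset\D$, and if $\D'\neq\D$ then $\D'$ must contain a boundary point of $\D$, contradicting the fact (read off from the sharp form of the inequality) that boundary points of $\D$ map outside $\Cr(X)$. Your properness dichotomy, by contrast, is not exhaustive as stated: a divergent sequence $(g_k,(Y_k,Z_k))$ may have \emph{both} $(Y_k,Z_k)\to\partial\D$ and $g_k\to\infty$, and the two degenerations can compensate. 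Indeed $NA$ acts on each factor of $X\times X$ by noncompact ball automorphisms, and such automorphisms can carry points arbitrarily close to $\partial X$ back into a fixed compact subset of $X$ (already for $\Sl(2,\R)$: $a_{\sqrt{k}}$ sends $i/k$ to $i$). So from ``a coordinate of the slice point tends to $\partial X$'' you cannot conclude that the corresponding coordinate of $\Phi(g_k,\cdot)$ tends to $\partial X$ without simultaneously controlling $g_k$, and vice versa. You flag this as ``bookkeeping,'' but it is precisely the content of the properness claim and it is not supplied; you would need a quantitative two-factor argument (e.g.\ that an automorphism pulling the first coordinate back into a compact set necessarily pushes the second one out), or else you should replace properness by the paper's slice-domain comparison, which sidesteps the issue.
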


\begin{proof} All what we have to show is that the map is 
defined and onto. 
\par To begin with we show that $\Phi$ is defined, i.e.
$\Im \Phi \subset \Cr(X)$. 
Set $n_{a,b}:= \exp(iZ_{a,b})$. Let $M=Z_K(A)$. By $M$-invariance 
it is no loss of generality to assume that $a$ is real. 
Then 
$$n_{a,b} =\begin{pmatrix} 1-b +a^2/2 & i a & b - a^2/2\\
- i a & 1 & i a\\ -b +a^2/2 & i a & 1+b -a^2/2\end{pmatrix}\, .
$$

We have to show that: 

$$a_{i\phi} n_{a,b} (0)\in X \quad{and}\quad \oline {a_{i\phi} n_{a,b} (0)}\
\in X $$
for all $|\phi|<\pi/4$ and  $|a|^2 + |b| <1/2$. 

Now 
$$n_{a,b}(0)={1\over 1+ b -a^2/2}(b-a^2/2, i a)\, .$$
Note that $n_{a,b}(0)\in X$ if and only if
$$(b-a^2/2)^2 + a^2 < (1+ b -a^2/2)^2$$
or 
$$-2b + 2a^2 <1$$
which is the defining condition of $\Lambda$.  
\par 
Applying $a_{i\phi}$ we obtain that  
$$a_{i\phi} n_{a,b}(0)= {\left(\cos\phi {b-a^2/2 \over 1 +b -a^2/2} + i\sin\phi,  {ia 
\over 1 +b -a^2/2}\right)\over \cos\phi + i\sin \phi { b - a^2/2\over 
1+ b -a^2/2}}\, .$$ 

Hence $a_{i\phi} n_{a,b}(0)\in X$ if and only if 
\begin{align*} 
\cos^2\phi + \sin^2 \phi { (b - a^2/2)^2 \over 
(1+ b -a^2/2)^2} > &\cos^2\phi {(b-a^2/2)^2 \over (1 +b -a^2/2)^2}+ \\
&+ \sin^2\phi + {a^2\over (1+b-a^2/2)^2}\end{align*}
or, equivalently, after clearing denominators:   

\begin{align*} 
(1+ b -a^2/2)^2 \cos^2\phi + (b - a^2/2)^2 &\sin^2 \phi 
> (b-a^2/2)^2 \cos^2\phi+ \\
&+ (1+b-a^2/2)^2\sin^2\phi + a^2\, .\end{align*}
Simplifying further we arrive at: 
$$ \cos^2\phi + 2(b-a^2/2) \cos^2\phi >  \sin^2\phi+ 
2(b-a^2/2)\sin^2\phi + a^2$$
and equivalently: 
\begin{equation}\label{in}
(1- 2b - 2a^2) \cos (2\phi)> (1-\cos 2\phi) a^2\, .\end{equation}
But this is the defining condition for $\D$. 

\par To see that the map is onto we observe that 
$\Cr(X) \subset N_\C A_\C \cdot x_0$. Hence there exist a 
domain $\D'\subset \af +\nf$  such that 

$$\Cr(X)=NA \cdot \{\exp(iY)\exp(iX)\mid (Y,X)\in \D'\}\, .$$ 
If $\D'$ is strictly larger then $\D$, then $\D'$ contains 
a boundary point of $\D$. As 
$$\partial \D =\partial \Omega \times\Lambda 
\amalg  \Omega \times \partial \Lambda
\amalg \partial \Omega \times\partial\Lambda, $$
we arrive at a contradiction with (\ref{in}).  
\end{proof}

\section{The crown for homogeneous harmonic spaces}

Let $S$ be a simply connected noncompact homogeneous harmonic space. 
According to \cite{He}, Corollary 1.2, there are the following possibilities for $S$:

\begin{enumerate}
\item $S=\R^n$. 
\item $S$ is the $AN$-part of a noncompact simple Lie group $G$ of real 
rank one. 
\item $S= A \ltimes N $ with $N$ a nilpotent group of Heisenberg-type and 
$A\simeq \R$ acting on $N$ by graduation preserving scalings. 
\end{enumerate}

The case of $S=\R^n$ we will not consider; groups under (i) 
are referred to as symmetric solvable harmonic groups. 
We mention that all spaces in (ii), except for those associated 
to $G=\SO_o(1,n)$, are of the type in (iii). 
Most issues of the Lorentz groups $G=\SO_o(1,n)$ readily reduce to 
$\SO_o(1,2)\simeq \PSl(2,\R)$ where 
comprehensive treatments are available. In fact for 
the real hyperbolic spaces  $X= H^n(\mathbb{R}) =
\SO_o(1,n)/\SO(n)$ we obtain the following result, 
 analogous to Proposition \ref{sl}.

\begin{prop} Let $G=\SO_o(1,n)$ ($n\geq 2$). 
Then the map 
$$ NA \times \Omega\times \Lambda\to \Cr(X), \ \ (na, H, Y)\mapsto 
na\exp(iH)\exp(iY)\cdot x_0$$
is a diffeomorphism. 
\end{prop}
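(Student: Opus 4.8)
The plan is to reduce the general Lorentz case $G=\SO_o(1,n)$ to the rank-one computation already carried out for $\Sl(2,\R)$ in Proposition \ref{sl}. The key structural observation is that $\SO_o(1,n)$ has real rank one, so $\af\simeq\R$ and $\Sigma^+$ consists of a single root $\alpha$ (possibly with $2\alpha$ also a root, but for $\SO_o(1,n)$ the root system is of type $A_1$ with multiplicity, so $\nf=\nf_\alpha$ is abelian and there is no $\nf_{2\alpha}$). Consequently $N\simeq\R^{n-1}$ is abelian, $\Omega$ is again a symmetric interval $(-\pi/4,\pi/4)$ in $\af$ after the standard normalization, and $\Lambda$ is an $M=Z_K(A)$-invariant ball in $\nf$. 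Because $M=\SO(n-1)$ acts transitively on the sphere directions of $\nf$, the whole problem is $M$-equivariant, and one expects $\D=\Omega\times\Lambda$ here (no coupling), in contrast to the $\SU(2,1)$ case where the $\nf_{2\alpha}$-directions forced the nontrivial condition \eqref{in}.

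First I would set up the complex model following Section 2: realize $X=H^n(\R)$ as a totally real submanifold of $X_\C=G_\C/K_\C$, recall the elliptic model \eqref{p=e} and the unipotent model \eqref{p=u}, and record that $\Cr(X)\subset N_\C A_\C\cdot x_0$. As in the two model cases, it then suffices to show that the map $\Phi:(na,H,Y)\mapsto na\exp(iH)\exp(iY)\cdot x_0$ is (a) well-defined, i.e.\ $\operatorname{Im}\Phi\subset\Cr(X)$, and (b) surjective; injectivity and the diffeomorphism property follow from the $NA$-equivariance together with the known structure of $\Cr(X)$. For step (a) I would fix a single root vector direction in $\nf$ and use $M$-invariance to reduce to a two-dimensional computation inside a suitable $\SO_o(1,2)$-subgroup, at which point the condition defining membership in the crown collapses to the one-variable inequality $|\alpha(H)|<\pi/4$ paired with the ball condition on $Y$, exactly mirroring the $\Sl(2,\R)$ computation via $\SO_o(1,2)\simeq\PSl(2,\R)$.

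For surjectivity (step (b)) I would argue as in Proposition \ref{su}: since $\Cr(X)=NA\cdot\{\exp(iH)\exp(iY)\mid(H,Y)\in\D'\}$ for some domain $\D'\subset\af+\nf$, and since $\Omega\times\Lambda$ is the correct product domain, any strict enlargement $\D'\supsetneq\Omega\times\Lambda$ would meet a boundary point lying in $\partial\Omega\times\Lambda$, $\Omega\times\partial\Lambda$, or $\partial\Omega\times\partial\Lambda$; by the explicit description of $\Omega$ and $\Lambda$ and the reduction to $\SO_o(1,2)$ this contradicts the defining inequalities. The $\pi_\af$ and $\pi_\nf$ projection facts guarantee that neither factor is deficient, so $\Phi$ lands onto all of $\Cr(X)$.

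The main obstacle, I expect, is verifying cleanly that the coupling present in the $\SU(2,1)$ case genuinely disappears here, i.e.\ that $\D=\Omega\times\Lambda$ decouples. This hinges on the abelianness of $N$ (no $\nf_{2\alpha}$ component) and on the transitivity of $M=\SO(n-1)$ on unit vectors in $\nf$, which together force the defining condition to depend on $Y$ only through $\|Y\|$ and on $H$ only through $\alpha(H)$, with no cross term. Once the $M$-reduction to the $\SO_o(1,2)$ plane is justified, the remaining inequality is the $n=2$ computation of Proposition \ref{sl}, so the genuinely new content is the equivariance argument that legitimizes that reduction.
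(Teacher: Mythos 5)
Your proposal is correct and follows exactly the route the paper intends: the paper states this proposition without proof, merely noting beforehand that questions about the Lorentz groups ``readily reduce to $\SO_o(1,2)\simeq\PSl(2,\R)$,'' and your $M=\SO(n-1)$-equivariant reduction to a totally geodesic $H^2(\R)$, combined with the $\Sl(2,\R)$ computation of Proposition~\ref{sl} and the Proposition~\ref{su}-style boundary argument for surjectivity, is precisely the intended filling-in of that remark. The decoupling $\D=\Omega\times\Lambda$ you highlight is indeed the point, and your justification via the abelianness of $\nf$ and transitivity of $M$ on directions in $\nf$ is sound.
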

     
We will now focus on type (iii).  
In the sequel we recall some basic facts about $H$-type groups and their 
solvable harmonic extensions. 
We refer to \cite{ROU} for a more comprehensive treatment and references. 
After that we introduce the crown domain for such harmonic extensions.

\subsection{$H$-type Lie algebras and groups}

Let $\nf$ be a real nilpotent Lie algebra of step two (that is, $[\nf,\nf]\neq \{0\}$ and $[\nf,[\nf,\nf]]=\{0\}$), 
equipped with an inner product $\langle\cdot ,\cdot\rangle$ and associated norm $| \cdot |$. Let $\zf$ be the center  
of $\nf$ and $\vf$ its orthogonal complement in $\nf$. Then
$$
\nf=\vf\oplus\zf,\;\;\;\;[\vf,\zf]=0,\;\;\;[\nf,\nf]=[\vf,\vf]\subset \zf. 
$$
For $Z\in\zf$ let $J_Z:\vf\rightarrow \vf$ be the linear map defined by 
\beq
\notag
\langle J_Z V,V'\rangle=\langle Z, [V,V']\rangle,\;\;\;\forall V,V'\in\vf.
\eeq
Then $\nf$ is called a {\em Heisenberg type} algebra (or {\em H-type} algebra, for short) if      
\beq
\label{HTYPE1}
J_Z^2=-|Z|^2 \id_{\vf} \qquad  (Z  \in\mathfrak{z}).
\eeq
A connected and simply connected Lie group $N$ is called an H-type group if its Lie algebra $\nf=\mbox{Lie}(N)$ is an H-type algebra, 
see \cite{KAPLAN}.  

We let $p=\mbox{dim}\,\mathfrak{v}$,  $q=\mbox{dim}\,\mathfrak{z}(\geq 1)$. Condition (\ref{HTYPE1}) implies that $p$ is even
and $[\vf,\vf]=\zf$. 

Moreover,  (\ref{HTYPE1}) implies that the 
map $Z\rightarrow J_Z$ extends to a representation of the real Clifford algebra $\mbox{Cl}(\zf)\cong \mbox{Cl}_q$ on $\vf$. This procedure can be
reversed and  yields a general method for constructing $H$-type algebras.

Since $\nf$ is nilpotent, the exponential map
$\exp:\nf\mapsto N$ is a diffeomorphism. The Campbell-Hausdorff formula implies the following product law in $N$:
$$
\exp X\cdot\exp X'=\exp\left(X+X'+\tfrac{1}{2}[X,X']\right),\;\;\;\;\forall X,X'\in\nf.
$$
This is sometimes written as
$$
(V,Z)\cdot (V',Z')=\left( V+V',Z+Z'+\tfrac{1}{2}[V,V']\right), 
$$
 using the exponential chart to parametrize the elements $n=\exp(V+Z)$ by the couples $(V,Z)\in\vf\oplus\zf=\nf$. 

\subsubsection{Reduction theory} 
We conclude this section with reduction theory for $H$-type Lie algebras to 
Heisenberg algebras. 

\par Let $\zf_1=\R Z_1$ be a one-dimensional subspace of $\zf$ and $\zf_1^\perp$ its 
orthogonal complement in $\zf$.  We assume that $|Z_1|=1$ and set $J_1:= J_{Z_1}$. 
We form the 
quotient algebra 

$$\nf_1:=\nf/ \zf_1^\perp\, $$
and record that $\nf_1$ is two-step nilpotent. Let $p_1:\zf\to \zf_1$ be the orthogonal 
projection. If we identify $\nf_1$ with the vector space $\vf \oplus \zf_1$ via
the linear map 

$$\nf_1 \to \vf\oplus \zf_1, \ \ (V,Z)+ \zf_1^\perp \mapsto (V, p_1(Z))\, ,$$
then the bracket in $\nf_1$ becomes in the new coordinates 

$$[ (V, cZ_1), (V' , c' Z_1)] =  (0, \langle Z_1, [V, V']\rangle Z_1 )$$
where $c, c'\in \R$ and $V, V'\in \vf$.
Since $\langle Z_1, [V, V'] \rangle = \langle J_1 V, V'\rangle $ we see that $J_1$ determines 
a Lie algebra automorphism of $\nf_1$ and thus $\nf_1$ is isomorphic
to the $p+1$-dimensional Heisenberg algebra.

\subsection{Harmonic solvable extensions of $H$-type groups}

Let $\nf$ be an H-type algebra with associated H-type group $N$. Let $\af$ be a one-dimensional Lie algebra with an inner product. Write
$\af=\R H$, where $H$ is a unit vector in $\af$. Let $A=\exp\af$ be a one-dimensional Lie group with Lie algebra $\af$ and isomorphic to 
$\mathbb{R}^+$ (the multiplicative group of positive real numbers).  Let the elements $a_t=\exp(tH)\in A$ act on $N$ by the dilations
$(V,Z)\rightarrow (e^{t/2}V,e^t Z)$  for $t\in\mathbb{R}$, and let $S$ be the associated semidirect product of $N$ and $A$:
$$
S=NA=N\rtimes A.
$$

The action of $A$ on $N$ becomes the inner automorphism
\beq
\label{AN1}
a_t\exp(V+Z)a_t^{-1}= \exp\left( e^{t/2}V+e^t Z\right),
\eeq
and the product in $S$ is given by
$$
\exp(V+Z)a_t\,\exp(V'+Z')a_{t'}=\exp(V+Z)\exp(e^{t/2}V'+e^tZ') a_{t+t'}.
$$

$S$ is a connected and simply connected Lie group with Lie algebra
$$
\sf=\nf\oplus \af=\vf\oplus\zf\oplus \af 
$$
and Lie bracket defined by linearity and the requirement that 
\beq
\label{CR1}
[H,V]=\frac{1}{2}V,\;\;\;\;\;\;\;[H,Z]=Z,\;\;\;\;\;\;\;\forall V\in\mathfrak{v},\;\forall Z\in\mathfrak{z}.
\eeq

The map $(V,Z,tH)\rightarrow \exp(V+Z)\exp(tH)$ is a diffeomorphism
of $\sf$ onto $S$. If we 
parametrize the elements $na= \exp(V+Z)\exp(tH)\in NA$ by the triples $(V,Z,t)\in\vf\times\zf\times\R$, then the 
product law reads 
$$
(V,Z,t)\cdot (V',Z',t')=\left(V+e^{t/2}V',Z+e^tZ'+\tfrac{1}{2}e^{t/2}[V,V'], t+t'\right),
$$
for all $V,V'\in\vf$, $Z,Z'\in\zf$, $t,t'\in\R$.
For $n=(V,Z,0)\in N$ and $a_t=(0,0,t)\in A$ we consistently get $na_t=(V,Z,t)$.

We extend the inner products on $\nf$ and $\af$ to an inner product $\langle \cdot,\cdot\rangle$ on $\sf$ by 
linearity and the requirement that $\nf$ be orthogonal to $\af$. 
The left-invariant Riemannian metric on $S$ defined by this inner product 
turns $S$ into a harmonic solvable group \cite{DR1}.

\subsection{The complexification and the crown}

Let $N_\C$ be the simply connected Lie group with Lie algebra $\nf_\C$ and set 
$A_\C =\C^*$. 
 Then $A_\C$ acts on $N_\C$ by $t\cdot (V, Z):=(tV, t^2V)$ for 
$t\in \C^*$ and $(V,Z)\in N_\C$. 

We define a complexification $S_{\C}$ of $S$ by 

$$S_\C = N_\C \rtimes A_\C\, . $$ 
For $z\in \C$ we often set $a_z:=\exp(zH)$ and note that $a_z$ corresponds to $e^{z/2}\in \C^*$.  
In particular 
\beq
\label{KERNEL}
\{z\in\mathbb{C}:\;\;\exp(zH)=e\}\subset 4\pi i\mathbb{Z}.
\eeq
It follows that the exponential map 
$\exp:\mathfrak{a}_{\mathbb{C}}\rightarrow A_{\mathbb{C}}$ is certainly
injective if restricted to $\mathfrak{a}\oplus iH(-2\pi,2\pi]$.

Motivated by our discussion of $\SU(2,1)$ and the discussed 
$\SU(n,1)$-reduction, 
we define the following sets for a more general harmonic $AN$-group. 

\beq
\label{OME1}
\Omega=\{tH\in\mathfrak{a}:\;\;|t|<\tfrac{\pi}{2}\},
\eeq
\beq
\label{LAM1}
\Lambda=\{(V,Z)\in\mathfrak{n}:\;\;\tfrac{1}{2}|V|^2+|Z|<1\},
\eeq
\beq
\label{D1}
\D=\left\{(V,Z,t)\in\sf:\;\cos t( 1-\tfrac{1}{2}|V|^2-|Z|) >\tfrac{1}{4} (1-\cos t)|V|^2\right\}_0,
\eeq
\beq
\label{D2}
D=\{\exp(itH)\exp(iV+iZ):\;\;(V,Z,t)\in \D\}\subset S_{\C}.
\eeq
Here as usual $\{\cdot \}_0$ denotes the connected component of $\{\cdot\}$
containing 0, and  we write $(V,Z,t)$ for the element $V+Z+tH$ of $\sf$.

The following result is immediate from the definition of $\D$. 

\begin{lem}  \label{PROPO1}    Let $\pi_{\nf}$ (resp. $\pi_{\af}$) denote the projection onto the first (resp. second) factor in 
$\sf= \nf\times\af$. Then

\begin{itemize}
\item $\D\subset \Lambda\times\Omega$, 
\item $\pi_{\nf} (\D) = \Lambda$, 
\item $\pi_{\af}(\D)= \Omega$. 
\end{itemize}
\end{lem}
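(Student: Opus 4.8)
The plan is to argue directly from the defining inequality in \eqref{D1}, writing $s:=\tfrac12|V|^2+|Z|$ so that the inequality reads $\cos t\,(1-s)>\tfrac14(1-\cos t)|V|^2$. Since $1-\cos t\ge0$ and $|V|^2\ge0$, the right-hand side is always nonnegative, so on the raw set $\widetilde\D:=\{(V,Z,t)\in\sf:\cos t\,(1-s)>\tfrac14(1-\cos t)|V|^2\}$ (i.e. before passing to the component containing $0$) one automatically has $\cos t\,(1-s)>0$. The two continuous functions $t\mapsto\cos t$ and $(V,Z)\mapsto 1-s$ are therefore nowhere zero on $\widetilde\D$: on the wall $\cos t=0$ the inequality degenerates to $0>\tfrac14|V|^2$, and on the wall $s=1$ it degenerates to $0>\tfrac14(1-\cos t)|V|^2$, both impossible.

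From this I would deduce $\D\subset\Lambda\times\Omega$ by a sign-constancy argument. Being continuous and nonvanishing on $\widetilde\D$, each of $\cos t$ and $1-s$ has constant sign on the connected component $\D\ni 0$; since both equal $1$ at the origin, $\cos t>0$ and $s<1$ hold throughout $\D$. The condition $s<1$ is exactly $(V,Z)\in\Lambda$ by \eqref{LAM1}, so $\pi_\nf(\D)\subset\Lambda$. For the other factor, $\pi_\af(\D)$ is a connected set containing $0$ inside $\{t:\cos t>0\}=\bigcup_k(-\tfrac\pi2+2k\pi,\tfrac\pi2+2k\pi)$, hence it lies in the single central interval $(-\tfrac\pi2,\tfrac\pi2)$, i.e. in $\Omega$ by \eqref{OME1}. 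This last confinement to one period of the cosine — which uses connectedness of $\D$ rather than the pointwise inequality — is the only step that needs genuine care and is the main (if modest) obstacle; everything else is bookkeeping.

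It then remains to see that the two projections are onto, for which I would exhibit explicit slices of $\D$. Taking $t=0$ collapses the inequality to $1-s>0$, so the slice $\Lambda\times\{0\}$ lies in $\widetilde\D$; as $(V,Z)\mapsto\tfrac12|V|^2+|Z|$ is convex this slice is convex, hence connected, and it contains $0$, so it lies in $\D$ and gives $\Lambda\subset\pi_\nf(\D)$. Taking $V=0,\ Z=0$ collapses the inequality to $\cos t>0$, so the segment $\{(0,0,t):|t|<\tfrac\pi2\}$ is connected, contains $0$, and lies in $\widetilde\D$, hence in $\D$, giving $\Omega\subset\pi_\af(\D)$. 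Combined with the inclusions of the previous paragraph, these prove $\pi_\nf(\D)=\Lambda$ and $\pi_\af(\D)=\Omega$ and complete all three assertions.
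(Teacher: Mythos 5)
Your proof is correct and is essentially just the careful unpacking of what the paper itself dismisses with the single sentence ``immediate from the definition of $\D$'': the positivity of the right-hand side forces $\cos t\,(1-\tfrac12|V|^2-|Z|)>0$, sign-constancy on the connected component containing $0$ gives the inclusion $\D\subset\Lambda\times\Omega$, and the slices $t=0$ and $V=Z=0$ give surjectivity of the two projections. No discrepancy with the paper; your write-up simply supplies the details the authors omit.
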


The set  $\D$ is the interior of the closed hypersurface in $\sf$ defined by the equation 
\beq
\label{IPE1}
\cos t( 1-\tfrac{1}{2}|V|^2-|Z|)=\tfrac{1}{4}(1-\cos t)|V|^2 \;\;\;\;(-\tfrac{\pi}{2}\leq t\leq\tfrac{\pi}{2}).
\eeq

\medskip

\noindent This can be rewritten as $\;\cos t(1-|V|^2/4-|Z|)=|V|^2/4,\;$ or also as
$$
1-\tan^2\tfrac{t}{2}=\tfrac{|V|^2}{2(1-|Z|)},
$$
i.e.,
$$
|t|=2\arctan\sqrt{1-\tfrac{|V|^2}{2(1-|Z|)}}.
$$
\smallskip
A picture of this hypersurface in $\R^3=\{(V,Z,t)\}$ for the case $\zf\simeq \R$, $\vf\simeq \R^2$ (with one coordinate suppressed), i.e., 
for $\SU(2,1)$, is given below.  Here $|t|\leq \pi/2$, $|Z|\leq 1$, and $|V|\leq \sqrt{2}$

\

A plot of the surface (\ref{IPE1}) in $\R^3=\{(V,Z,t)\}$.
\qquad

\vskip20pt

\begin{center}
\scalebox{1}
{\includegraphics{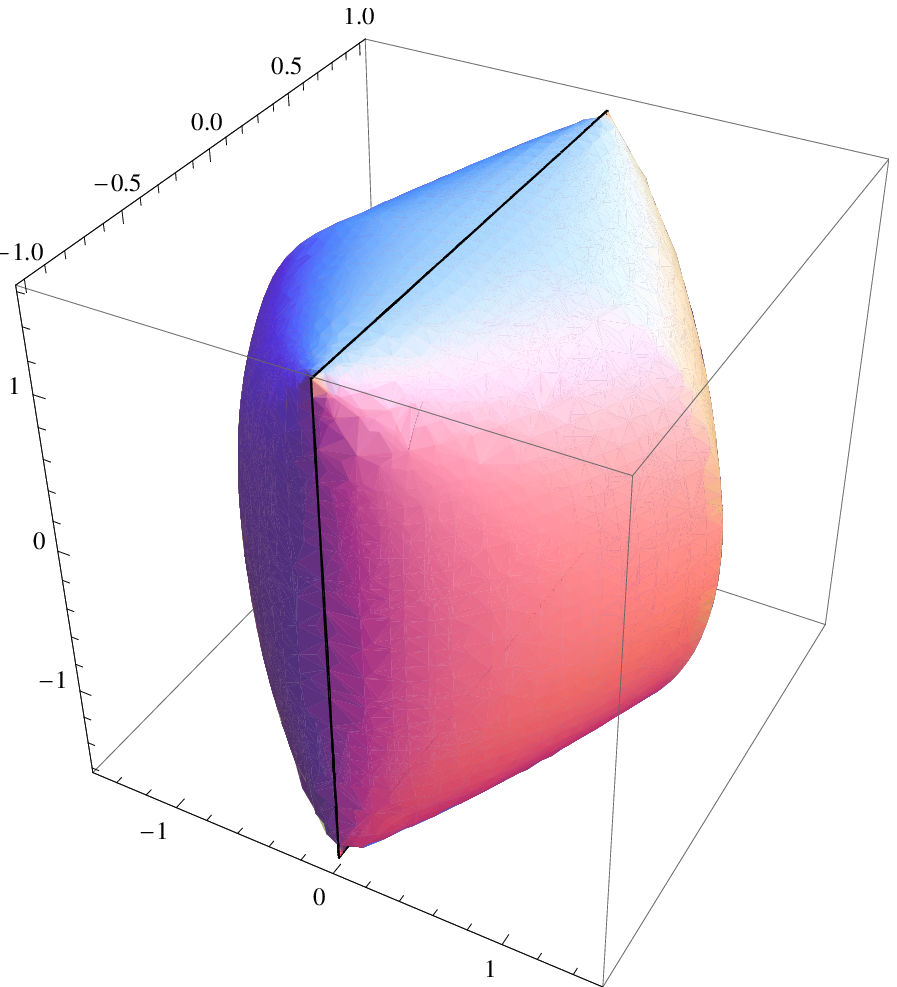}}
\end{center}

\qquad

\begin{df}
The complex crown of $S$ will be defined as the following subset of $S_{\mathbb{C}}$:
\beq
\label{CR44}
\mbox{\em Cr}(S)=NAD
\subset 
NA\exp(i\Omega)\exp(i\Lambda)\subset S_{\C}.
\eeq
\end{df}

It is easy to see that $\mbox{Cr}(S)$ is open and simply connected in $S_{\C}$.

We conclude with the proof of the mixed model of the crown for rank one symmetric spaces.

\begin{thm} \label{th=1} Let $X=G/K$ be a non-compact Riemannian symmetric space of 
rank one, $X\neq H^n(\mathbb{R})$. Then, with $S=NA$, 
$$\Cr(X)=\Cr(S) .$$
\end{thm}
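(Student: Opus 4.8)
The plan is to leverage the two basic cases together with the reduction theory assembled above. First I would record that for every rank one $X\neq H^n(\R)$ the nilpotent Iwasawa factor $N$ is of $H$-type and $S=NA$ is its harmonic extension, so that $\Cr(S)=NAD$ is defined; and that both sides of the asserted equality are left $NA$-invariant, since $\Cr(X)=G\exp(i\Omega)\cdot x_0$ is $G$-invariant with $NA\subset G$, while $\Cr(S)=NAD$ carries the factor $NA$ by construction. Because moreover $\Cr(X)\subset N_\C A_\C\cdot x_0$, the orbit map $s\mapsto s\cdot x_0$ identifies $\Cr(S)\subset S_\C$ with its image $NAD\cdot x_0\subset X_\C$ (injectivity on the $D$-slice following, as in Proposition \ref{su}, from this same inclusion). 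The theorem is then equivalent to the membership criterion
$$\exp(itH)\exp(iV+iZ)\cdot x_0\in\Cr(X)\iff (V,Z,t)\in\D .$$

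For the implication $\Leftarrow$, i.e.\ $\Cr(S)\subseteq\Cr(X)$, I would reduce to the case $G=\SU(2,1)$ settled in Proposition \ref{su}. Fixing $(V,Z,t)\in\D$, I use $M=Z_K(A)$-invariance to normalize $Z=|Z|Z_1$ with $|Z_1|=1$ (the case $Z=0$ being a limit), set $J_1=J_{Z_1}$, and pass to the subalgebra $\sf'=\vf'\oplus\R Z_1\oplus\af$ with $\vf'=\mathrm{span}_\R\{V,J_1V\}$. By the reduction of $H$-type algebras to Heisenberg algebras recalled above, the identity $\langle J_1V,V'\rangle=\langle Z_1,[V,V']\rangle$ shows that $\vf'\oplus\R Z_1$ is the three-dimensional Heisenberg algebra, so $\sf'$ is the solvable Iwasawa algebra of a totally geodesic copy $X'\simeq H^2(\C)$ of $\SU(2,1)$ through $x_0$, and the point $\exp(itH)\exp(iV+iZ)\cdot x_0$ lies in $X'_\C$. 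Since $X$ and $X'$ share the grading $[H,V]=\tfrac12 V$, $[H,Z]=Z$, they have the identical restricted root system $BC_1$ and the identical interval $\Omega$, whence $\Cr(X')\subseteq\Cr(X)$; and after matching normalizations $t\leftrightarrow 2\phi$, $|V|^2\leftrightarrow 4|a|^2$, $|Z|\leftrightarrow 2|b|$ the defining inequality of $\D$ becomes exactly (\ref{in}), placing the point in $\Cr(X')\subseteq\Cr(X)$.

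The reverse inclusion and maximality then follow the scheme of Proposition \ref{su}. From $\Cr(X)\subset N_\C A_\C\cdot x_0$ one writes $\Cr(X)=NA\{\exp(itH)\exp(iV+iZ)\cdot x_0\mid (V,Z,t)\in\D'\}$ for an $NA$-invariant domain whose slice $\D'$ contains $\D$ by the previous step. If $\D'\supsetneq\D$, then $\D'$ meets $\partial\D=(\partial\Lambda\times\Omega)\amalg(\Lambda\times\partial\Omega)\amalg(\partial\Lambda\times\partial\Omega)$, where the relation of $\D$ degenerates to the equality (\ref{IPE1}); reducing once more to the totally geodesic $\SU(2,1)$-slice, the corresponding point lies on $\partial X'$ rather than in $X'$, contradicting the strictness of (\ref{in}). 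Hence $\D'=\D$ and $\Cr(X)=NAD=\Cr(S)$. The main obstacle is precisely the membership criterion for the non-Hermitian spaces $H^l(\mathbb{H})$ and $H^2(\mathbb{O})$, where $\Cr(X)$ is not a product $X\times\oline X$: this is what the passage to the Hermitian totally geodesic $\SU(2,1)$-slice is designed to overcome, and for $H^2(\mathbb{O})$, where no group inclusion $\SU(2,1)\subset F_4$ is available, the reduction must be carried out at the level of Lie algebras and totally geodesic submanifolds rather than of groups.
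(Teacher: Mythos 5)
Your proposal follows essentially the same route as the paper: both directions are handled by $M$-invariance together with $\SU(2,1)$-reduction to a totally geodesic Hermitian slice, with the inclusion $\Cr(S)\subseteq\Cr(X)$ pulled back to Proposition \ref{su} and maximality obtained from the boundary decomposition of $\D$. You spell out the reduction (the slice $\vf'=\mathrm{span}_\R\{V,J_1V\}$, the matching of normalizations with (\ref{in})) in more detail than the paper, which merely asserts that the claim ``reduces to $\SU(2,1)$''; the argument is correct.
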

 
\begin{proof} We first show that $\Cr(S)\subset \Cr(X)$.
 We have to show that 
$\exp(itH)\exp(iY)\cdot x_0\subset \Cr(X)$ for all 
$(Y,t)\in \D$. We apply $M=Z_K(A)$ and the assertion is reduced to $G=\SU(2,1)$,
where it was shown in Proposition \ref{su} above.  

\par In order to conclude  the proof of the theorem, it is enough to show that 
the elements which are in the boundary of $D$ do not lie in  $\Cr(X)$, i.e.,
$$
\{\exp(itH)\exp(iV+iZ):\;\;(V,Z,t)\in \partial\D\} \cap  \Cr(X) =\emptyset.
$$
This again reduces to  $G=\SU(2,1)$, and is easily verified.
\end{proof}

\section{Geometric Analysis}

In this section we will show that eigenfunctions of the Laplace-Beltrami-Operator 
on $S$ extend holomorphically to $\Cr(S)$ and that $\Cr(S)$ is maximal 
with respect to this property. 

\subsection{Holomorphic extension of eigenfunctions}

\par For $z\in D$ we consider the following totally real embedding 
of $S$ into $\Cr(S)$ given by 

$$S\hookrightarrow \Cr(S), \ \ s\mapsto sz\, .$$
Now Let $\L$ be the Laplace-Beltrami operator on $S$, explicitly 
given by, see \cite{ROU} 

$$\L:=\sum_{j=1}^p V_j^2 +\sum_{i=1}^q Z_i^2 + H^2 - 2\rho H$$
where the $(V_j)_j$ and $(Z_i)_i$ form an orthonormal basis of 
$\vf$ and $\zf$ respectively.  Here we consider elements $X\in \sf$ as left-invariant 
vector fields on $S$. Hence it is clear that $\L$ extends to a left $S_\C$-invariant 
holomorphic differential operator on $S_\C$ which we denote by $\L_\C$. 
Now if $M\subset S_\C$ is a totally real analytic submanifold, then we can restrict 
$\L_\C$ to $M$, in symbols $\L_M$,  in the following way: if $f$ is a real analytic function 
near $m\in M$ and $f_\C$ a holomorphic extension of $f$ in a complex neighborhood 
of $m$ in $S_\C$, then set:

$$(\L_M f)(m):= (\L_\C f_\C)(m)\, .$$    
Then: 

\begin{prop} For all $z\in D$ the restriction $\L_{Sz}$ is elliptic. 
\end{prop}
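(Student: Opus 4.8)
The plan is to reduce the assertion to a pointwise statement about the principal symbol of $\L_{Sz}$ and then to the inequality defining $\D$. Since the first–order term $-2\rho H$ does not affect the symbol, only the second–order part $\sum_j V_j^2+\sum_i Z_i^2+H^2$ matters; this is the sum of squares of the orthonormal frame $\{V_j,Z_i,H\}$ of $\sf$, and its holomorphic extension is the corresponding sum of squares of left–invariant holomorphic vector fields on $S_\C$. First I would note that $Sz$ is a single orbit of the left $S$–action and that $\L_\C$ is left $S_\C$–invariant; hence $\L_{Sz}$ is invariant under the transitive left $S$–action on $Sz$, and it suffices to check ellipticity at the one point $z$.

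Next I would transport the computation to the identity. Right translation $R_{z^{-1}}$ is a biholomorphism of $S_\C$ carrying $Sz$ onto $S$, and it sends the left–invariant field $X$ to $\Ad(z)X$. Thus $\L_{Sz}$ becomes the restriction to the totally real submanifold $S\subset S_\C$ of the holomorphic operator whose second–order part is $\sum_k(\Ad(z)X_k)^2$, and its principal symbol at the identity is the complex quadratic form $w\mapsto\sum_k\langle\Ad(z)X_k,\,w\rangle_{\C}^2$ on $\sf$, where $\langle\cdot,\cdot\rangle_\C$ is the $\C$–bilinear extension of the inner product. Ellipticity of $\L_{Sz}$ is precisely the statement that this form has no nonzero real null vector. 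To evaluate it I would use that $\nf$ is two–step nilpotent, so a direct check gives $(\operatorname{ad}(V+Z))^2=0$ and hence $\Ad(\exp(i(V+Z)))=\id+i\operatorname{ad}(V+Z)$, while $\Ad(\exp(itH))=e^{it\operatorname{ad}H}$ acts as $e^{it/2}$ on $\vf$, as $e^{it}$ on $\zf$, and trivially on $\af$. Feeding $z=\exp(itH)\exp(i(V+Z))$ into the symbol and using the $H$–type identity $|J_ZV|^2=|Z|^2|V|^2$ coming from \eqref{HTYPE1} turns the null condition into an explicit algebraic relation among $|V|^2$, $|Z|$, $\cos t$ and the pairings $\langle V,w\rangle,\langle Z,w\rangle,\langle J_ZV,w\rangle$.

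The decisive step—and the step I expect to be the main obstacle—is to show that this relation admits no nonzero real solution $w$ as long as $(V,Z,t)\in\D$, with its degeneration governed by the hypersurface \eqref{IPE1}. The difficulty is that the real part of the symbol is not definite, so one cannot simply invoke positivity; instead one must track the real and imaginary parts together and rule out a common zero in the interior of $\D$. Here I would argue as in Theorem \ref{th=1}: using the reduction theory of Section~3.1 to normalize the $H$–type data to a single central direction (the Heisenberg, hence $\SU(n,1)$, case), and then reducing as there to $\SU(2,1)$, the null condition should collapse to the inequality \eqref{in} defining $\D$ together with its boundary, which would complete the proof.
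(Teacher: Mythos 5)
Your overall strategy tracks the paper's quite closely: both arguments reduce ellipticity of $\L_{Sz}$ to the pointwise nondegeneracy statement that the complexified quadratic form attached to $\Ad(z)$ (the paper's $L(z)=\Ad(z)^t\Ad(z)$ of (\ref{e12})) has no nonzero real null vector, and both then invoke the reduction $\nf\to\nf_1=\nf/\zf_1^\perp$ of Section 3.1 to pass to the Heisenberg, i.e.\ $\SU(n,1)$, situation. Your computation of $\Ad(z)$ -- using $(\operatorname{ad}(V+Z))^2=0$ on $\sf$ and the weights $\tfrac12,1,0$ of $\operatorname{ad}H$ -- is correct and is the general-case analogue of the paper's explicit $\Sl(2,\R)$ warm-up.

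The genuine gap is exactly where you flag "the main obstacle": you assert that after reduction the null condition "should collapse to the inequality (\ref{in})," but you never verify this, and that verification \emph{is} the proposition -- everything before it is bookkeeping. It is also not a routine check: the form is complex and its real part is indefinite, so one must genuinely rule out a simultaneous zero of real and imaginary parts for all $(V,Z,t)\in\D$, and it is not a priori clear that the locus of degeneration coincides with $\partial\D$ rather than cutting into the interior. The paper sidesteps this computation entirely: having reduced to the Heisenberg case, it quotes the proof of Theorem 3.2 in \cite{KS}, where ellipticity of the restricted Laplacian on $Sz$ is established for the crown of any Riemannian symmetric space, and then uses Theorem \ref{th=1} ($\Cr(X)=\Cr(S)$) to transfer that statement to $\Cr(S_1)$. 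To repair your proof you must either carry out the $\SU(2,1)$ symbol computation in full (a $4$-dimensional quadric analysis, not the two-variable one of the $\Sl(2,\R)$ case) or supply the citation the paper relies on. A secondary point you should also address: the quotient $\sf\to\sf_1$ is not injective, so a putative null vector $\xi$ could be annihilated, or could fail to remain null, under the projection; the paper's phrase "for all choices of $S_1$" is doing work here, and your proposal's "normalize the $H$-type data to a single central direction" does not yet explain why some choice of $Z_1$ is compatible with both the point $z$ and the vector $\xi$.
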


\begin{proof} To illustrate what is going on we first give a proof for those $S$ related to 
$G=\Sl(2,\R)$. Here we have $D:=\exp(i\Omega)\exp(i\Lambda)$ and 
$\L= V^2 + H^2 -{1\over 2} H$. 
Let $z=\exp(itH)\exp(ixV)\in D$. Using $[H,V]=V$ we get
\begin{align*} \Ad(z) H  & = H - i e^{it}x V \\ 
\Ad(z) V & = e^{it} V\, .\end{align*}
It follows that the leading symbol, or principal part, of $\L_{Sz}$ is given by:

$$[\L_{Sz}]_{\rm prin}= (H- i e^{it}x V)^2 + e^{2it}V^2\, .$$
Let us verify that $\L_{Sz}$ elliptic. The associated quadratic form is given by 
the matrix 

$$L(z):=\begin{pmatrix}  1 & - i x e^{it} \\ - ix e^{it} & e^{2it}(1-x^2) \end{pmatrix}\, .$$   
We have to show that $\langle  L(z)\xi, \xi\rangle =0$ has no solution 
for $\xi=(\xi_1, \xi_2)\in \R^2\backslash \{0\}$. 
We look at 

$$\xi_1^2 - 2ix e^{it} \xi_1 \xi_2  + e^{2it} (1-x^2) \xi_2^2 =0\, .$$
Now $\xi_2=0$ is readily excluded and we remain with the quadric 
$$\xi^2 - 2ix e^{it} \xi + e^{2it} (1-x^2)=0 $$
whose solutions are 

\begin{align*} \lambda_{1,2} & = ixe^{it} \pm \sqrt{-x^2 e^{2it}- (1-x^2)e^{2it}}\\
& =  ie^{it}  ( x  \pm  1 ).\end{align*} 
These are never real in the domain $-\tfrac{\pi}{2}<t<\tfrac{\pi}{2}$ and
$-1<x<1$. The same proof works for those $S$ related to $\SO_o(1,n)$ , $n\geq 2$.

\par To put the computation above in a more abstract framework: it is to show here that for $z\in D$ the operator $L(z): \sf_\C \to \sf_\C$ 
defined by 

\begin{equation} \label{e12} L(z):= \Ad(z)^t \Ad (z) \end{equation}
is elliptic in the sense that $\langle L(z)\xi, \xi\rangle =0 $ for $\xi \in \sf$ implies $\xi=0$.

So for the sequel we assume that $\nf$ is $H$-type, i.e. $\zf\neq \{0\}$. We will 
reduce to the case where $N$ is a Heisenberg group, i.e. $S$ is related to 
$\SU(1,n)$.  We use the already introduced technique of reduction to Heisenberg groups.
So let $Z_1\in \zf$ be a normalized element and $\nf_1=\nf/ \zf_1^\perp$ as before.
We let $S_1$ be the harmonic group associated to $N_1$ and note that 
there is a natural group homomorphism $S\to S_1$ which extends to a holomorphic 
map $S_\C \to (S_1)_\C$ which  maps $\Cr(S)$ onto $\Cr(S_1)$. 
Now the assertion is true for $\Cr(S_1)$ in view of \cite{KS} (proof of Th. 3.2) 
and Theorem \ref{th=1}.
Since (\ref{e12}) is true for $S$ if it is true for all choices of $S_1$, 
the assertion follows.
\end{proof} 

As a consequence of this fact, we obtain as in \cite{KS} that:

\begin{thm} Every $\L$-eigenfunction on $S$ extends to a holomorphic function 
on $\Cr(S)$.
\end{thm}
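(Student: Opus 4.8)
The plan is to upgrade the pointwise ellipticity of $\L_{Sz}$, just established, into a genuine holomorphic extension by a continuation (``marching'') argument over the parameter domain $\D$, exactly in the spirit of \cite{KS}. To begin with, since $\L$ is itself an elliptic operator on $S$ with real-analytic coefficients, every $\L$-eigenfunction $f$, say $\L f=\lambda f$, is real-analytic on $S$. Identifying $S$ with the base slice $S\cdot x_0$ (the slice $z=e$, corresponding to $0\in\D$), $f$ therefore extends to a holomorphic function $f_\C$ on some open neighborhood of $S$ in $S_\C$; in particular $f_\C$ is defined on all slices $Sz$ with $z\in D$ near the base.

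The engine of the argument is the following local statement, which I would extract from \cite{KS}. Fix $z\in D$. Via the totally real embedding $s\mapsto sz$ the restriction $f_z:=f_\C|_{Sz}$ is a real-analytic solution of $\L_{Sz}f_z=\lambda f_z$ on the totally real submanifold $Sz$ (note that $Sz$ is the image of the totally real $S$ under right multiplication by $z$, hence itself totally real). Because the preceding Proposition gives that $\L_{Sz}$ is \emph{elliptic}, such a solution extends uniquely to a holomorphic solution of $\L_\C F=\lambda F$ in a full complex neighborhood of $Sz$ in $S_\C$; and---this is the crucial quantitative point---the size of this neighborhood is bounded below in terms of the ellipticity constant of $\L_{Sz}$ and of bounds on the (left-invariant, hence $S$-homogeneous) coefficients of $\L_\C$. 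By uniqueness of holomorphic continuation across the totally real $Sz$, these local extensions agree with $f_\C$ wherever both are defined.

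With this in hand I would run a connectedness argument over $D$, which is connected as the continuous image of the connected set $\D$ (defined as a connected component in \eqref{D1}). Let $U\subset D$ be the set of $z$ for which $f_\C$ extends holomorphically to a neighborhood of $Sz$. Then $U$ is nonempty (the base slice lies in $U$ by the first step) and open (extension near one slice gives it for nearby slices). For closedness, fix a limit point $z_\infty\in D$ and a path in $D$ from the base to $z_\infty$: along this compact path the ellipticity constant of $\L_{Sz}$ is bounded below by continuity, so by the local statement the extension propagates from slice to slice in steps of uniform size and reaches $Sz_\infty$, whence $z_\infty\in U$. Thus $U=D$, and $f_\C$ is holomorphic on $\bigcup_{z\in D}Sz=NAD=\Cr(S)$. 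Since $\Cr(S)$ is simply connected (as noted after the Definition), the locally defined extensions glue to a single-valued holomorphic function on $\Cr(S)$ restricting to $f$ on $S$, which is the assertion. The $\SU(1,n)$-reduction and Theorem~\ref{th=1} are invoked exactly as in the ellipticity proof to transport the input of \cite{KS} to the general harmonic $AN$-group.

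The main obstacle is precisely this closedness step: one must guarantee that the complex neighborhoods to which successive slices extend do not shrink to zero as one approaches $z_\infty$, i.e. one needs the \emph{quantitative} ellipticity estimate (a lower bound on the ellipticity constant, uniform on compact subsets of $D$) rather than mere pointwise ellipticity, so that a fixed compact path is traversed in finitely many uniform steps. The attendant single-valuedness issue---that continuation around closed loops returns the same branch---is what the noted simple connectivity of $\Cr(S)$ resolves; apart from these two points the argument is a routine open-and-closed continuation.
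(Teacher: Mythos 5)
Your argument is correct and follows essentially the same route as the paper: elliptic regularity gives analyticity and an initial $S$-invariant holomorphic extension, and the ellipticity of $\L_{Sz}$ combined with the local extension result of H\"ormander/\cite{KS} drives a continuation over the parameter domain $D$. The paper organizes the continuation via the scaled family $\D_t=t\D$ and a maximal $t$ (deriving a contradiction at a boundary point beyond which $f$ would fail to extend) rather than your path-based open--closed argument with uniform step sizes, but the substance is the same.
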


\begin{proof} (analogous to the proof of Th. 3.2 of  \cite{KS}). Let $f$ be an $\L$-eigenfunction on $S$.
As $\L$ is elliptic, the regularity theorem 
for elliptic differential operators implies that 
$f$ is an analytic function. Hence $f$ extends to some holomorphic 
function in a neighborhood of $S$ in $S_\C$. 
 
\par As $\L$ is $S$-invariant, we may assume that this 
neighborhood is $S$ invariant.  Let $0\leq t\leq 1$ and define $\D_t:=t\D$
and correspondingly $D_t$. 
We have shown that $f$ extends holomorphically to a domain 
$SD_t\subset S_\C$ for some $0<t\leq 1$.

\par If $t=1$, we are finished. Otherwise we find 
a $(Y,r) \in \partial \D_t$  such that 
$f$ does not extend beyond $z=s\exp(irH)\exp(iY) $ for some 
$s\in S$. By $S$-invariance we may assume that $s=1$.
Set $z:=\exp(irH)\exp(iY)$. By our previous Proposition $\L_{Sz}$
is elliptic. 
Now it comes down to choose appropriate local 
coordinates to see that $f$ extends holomorphically on a 
complex cone based at $z$. One has to verify condition 
(9.4.16) in \cite{H}, Cor. 9.4.9 so that 
\cite{H}, Cor. 9.4.9, applies and one concludes that 
$f$ is holomorphic near $z$ -- see \cite{KS}, p. 837-838, for the details. This is a contradiction 
and the theorem follows.
\end{proof}

\subsection{Maximality of $\Cr(S)$}

We begin with a collection of  some facts about Poisson kernels on $S$. 
Let us denote by $s:S \to S$ the geodesic symmetry, centered at the identity.
Every $z\in S_\C$ can be uniquely written as $z=n(z) a(z) $ with $n\in N_\C$ and $a\in A_\C$. 
As $\Cr(S)$ is simply connected,  we obtain for every $\lambda\in \af_\C^*$ a holomorphic map 

$$a^\lambda: \Cr(S) \to \C, \ z\mapsto e^{\lambda\log a(z)}\, .$$
The function $P_\lambda:= a^\lambda\circ s$ on $S$ is referred to as {\it Poisson kernel 
on $S$ with parameter $\lambda$}. 
We note that both $a^\lambda$ and $P_\lambda$ are $\L$-eigenfunctions \cite{D,ADY}. 
We shall say in the sequel that $\lambda\in \af_\C^*$, resp. $P_\lambda$,  is {\it positive} 
if $\Re\lambda$ is a positive multiple of the element $\beta\in \af_\C^*$ defined by $\beta(H)=1$.

\begin{thm} Let $\lambda\in \af_\C^*$ be positive. Then $\Cr(S)$ is the largest $S$-invariant 
domain in $S_\C$ containing $S$ to which $P_\lambda$ extends holomorphically. 
\end{thm}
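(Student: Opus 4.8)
The plan is to establish maximality by showing that the Poisson kernel $P_\lambda$ genuinely fails to extend across any boundary point of $\Cr(S)$, which combined with the already-proven holomorphic extension result yields that $\Cr(S)$ is the \emph{largest} such $S$-invariant domain. Since $\Cr(S)=NAD$ and the domain is built $S$-equivariantly from $\D$, by $S$-invariance it suffices to analyze the situation at a representative boundary point $z_0=\exp(irH)\exp(iV_0+iZ_0)$ with $(V_0,Z_0,r)\in\partial\D$, and to show that $P_\lambda$, or equivalently $a^\lambda$, develops a genuine singularity there. The reduction technique to Heisenberg groups used in the ellipticity proposition applies again: first I would reduce from a general $H$-type $S$ to the Heisenberg case $S_1$ associated to $\SU(n,1)$ via the quotient map $\nf_1=\nf/\zf_1^\perp$ and the induced holomorphic surjection $\Cr(S)\to\Cr(S_1)$, and then invoke Theorem~\ref{th=1} to identify $\Cr(S_1)$ with the crown $\Cr(X)$ of the rank one symmetric space, where the corresponding maximality statement for Poisson kernels is available in \cite{KS}.

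The key mechanism is that the holomorphically continued function $a^\lambda(z)=e^{\lambda\log a(z)}$ is controlled by the $A_\C$-component $a(z)$ in the decomposition $z=n(z)a(z)$, and on the elliptic/unipotent model this $a$-component is exactly what blows up or becomes singular at $\partial\Cr(X)$. Concretely, I expect that as a point in $\Cr(S)$ approaches the boundary hypersurface (\ref{IPE1}), the argument of $\log a(z)$ reaches the boundary of the strip on which the branch is single-valued, forcing a logarithmic or exponential singularity in $a^\lambda$ once $\lambda$ is positive, i.e. $\Re\lambda$ a positive multiple of $\beta$ with $\beta(H)=1$. The positivity hypothesis on $\lambda$ is essential here: it guarantees that the $a^\lambda$ grows (rather than decays or stays bounded) in the relevant direction, so that no holomorphic extension past the boundary is possible. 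This is precisely the content transported from \cite{KS}, where positivity of the spectral parameter is what makes the Poisson kernel the extremal eigenfunction detecting the boundary.

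The main obstacle will be the boundary-point bookkeeping, namely controlling the behaviour of $a^\lambda$ uniformly as one approaches $\partial\D$ through each of the three pieces
$$\partial\D=\partial\Omega\times\Lambda\ \amalg\ \Omega\times\partial\Lambda\ \amalg\ \partial\Omega\times\partial\Lambda,$$
in the analogue of the decomposition appearing in Proposition~\ref{su}. One must check that the singularity persists on each stratum and is not accidentally cancelled by the $\exp(iV+iZ)$-factor; the stratum $\partial\Omega\times\partial\Lambda$ of higher codimension is the most delicate, but since any enlargement of $\D$ necessarily crosses the hypersurface (\ref{IPE1}) transversally along $\partial\Omega\times\Lambda$ or $\Omega\times\partial\Lambda$, it suffices to obstruct extension across those two open strata, exactly as the onto-ness argument in Proposition~\ref{su} used the structure of $\partial\D$.

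Finally, I would assemble the pieces. Suppose for contradiction that $P_\lambda$ extends holomorphically to some $S$-invariant domain $D'\supsetneq\Cr(S)$. Then $D'$ would contain a boundary point of $\Cr(S)$ projecting to a point of $\partial\D$ lying in one of the two codimension-one strata; pushing forward to $\Cr(S_1)$ under the reduction map and applying the corresponding rank one maximality from \cite{KS} together with Theorem~\ref{th=1}, one gets that $P_\lambda$ (whose pullback agrees with the Heisenberg Poisson kernel on the nose, because $a(z)$ is insensitive to the $\zf_1^\perp$-quotient) cannot be holomorphic at the image point. This contradiction shows no such $D'$ exists, and together with the already-established extension of $P_\lambda$ to all of $\Cr(S)$ it proves that $\Cr(S)$ is maximal, as asserted.
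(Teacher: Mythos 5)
There is a genuine gap, and it lies in the direction of your reduction. You reduce to the Heisenberg/rank-one case via the \emph{quotient} $\nf_1=\nf/\zf_1^\perp$ and the induced surjection $\Cr(S)\to\Cr(S_1)$, and you justify this by claiming that $P_\lambda$ is the pullback of the Poisson kernel on $S_1$ ``on the nose, because $a(z)$ is insensitive to the $\zf_1^\perp$-quotient.'' The function $a^\lambda$ is indeed insensitive to the quotient, but $P_\lambda=a^\lambda\circ s$ is not: the geodesic symmetry $s$ is not a group homomorphism and does not descend to $S_1$. Concretely, the explicit formula for $P_\lambda$ (\cite{ADY}, (2.35)) involves the full norm $|Z|^2$ on $\zf$, whereas the quotient replaces $Z$ by its projection onto $\zf_1$, so $P_\lambda^S(V,Z,t)\neq P_\lambda^{S_1}(V,p_1(Z),t)$ whenever $Z\notin\zf_1$. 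Hence non-extendability of the Poisson kernel on $\Cr(S_1)$ at the image point tells you nothing about $P_\lambda$ on $\Cr(S)$, and the contradiction you derive in the last paragraph does not go through. (Even if the pullback identity held, you would still owe an argument --- e.g.\ a local holomorphic section --- to transfer non-extendability of $g$ at $\pi(z_0)$ to non-extendability of $g\circ\pi$ at $z_0$; but the pullback identity is the real failure.)

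The paper runs the reduction the opposite way: by $\SU(2,1)$-reduction in the sense of \cite{CDKR1}, Section 2, one realizes a harmonic group $S_1$ of $\SU(2,1)$-type as a \emph{subgroup} of $S$ (taking $\zf_1=\R Z_1$ together with a suitable $J_{Z_1}$-stable plane in $\vf$), so that $\Cr(S_1)\subset\Cr(S)$ reaches the given boundary point and, because the norms of $\vf_1$ and $\zf_1$ are induced from those of $\vf$ and $\zf$, the \emph{restriction} $P_\lambda|_{S_1}$ is a genuine positive Poisson kernel on $S_1$ by the same explicit formula. This restriction step is what makes the transfer of non-extendability legitimate. You should also be aware that the base (symmetric) case is not simply ``available in \cite{KS}'': the paper obtains it by combining the non-extendability of the basic spherical function $\phi_0$ beyond $\partial\Cr(X)$ (\cite{KOS}, Th.~5.1, with the corrigendum in \cite{KO}) with the integral formula expressing holomorphically extended spherical functions in terms of Poisson kernels (\cite{KSt1}, Th.~4.2); your heuristic that $a^\lambda$ develops a branch singularity at $\partial\Cr(X)$ is not a proof and is not how the obstruction actually arises. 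The stratification of $\partial\D$ you discuss is a reasonable bookkeeping device but does not repair either of these two points.
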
 
\begin{proof} First the theorem is true for symmetric $S$. To be more precise, given $z\in \partial \Cr(S)$, then 
it was shown in \cite{KOS}, Th. 5.1 (and corrigendum in \cite{KO}, Remark 4.8) that there 
exists an $s\in S$ such that the basic 
spherical function $\phi_0$ on $S\simeq X$ does not extend beyond $sz$. 
In view of the integral formulas for holomorphically extended spherical functions
(see \cite{KSt1}, Th. 4.2) it follows that $\Cr(S)$ is the maximal $S$-invariant 
domain in $\Cr(S)$ to which any  positive $P_\lambda$ 
extends holomorphically.   
\par We apply $\SU(2,1)$-reduction (see \cite{CDKR1}, section 2) 
and the fact that Poisson-kernels ``restrict'', i.e. if $z\in \Cr(S)$, then we can put $z \in \Cr(S_1)$ with 
$\Cr(S_1)$ an $\SU(2,1)$-crown contained in $\Cr(S)$ such that the restriction of the Poisson kernel 
$P_\lambda:=P_\lambda|_{S_1}$ is a positive Poisson kernel on $S_1$. This is clear from the explicit
formula for $P_{\lambda}$ (see \cite{ADY}, formula (2.35)). Thus the situation is reduced to the 
symmetric case and the theorem proved.
\end{proof}

\begin{cor} \label{th=max} $\Cr(S)$ is the largest $S$-invariant domain in $S_\C$ which contains $S$ with the  
property that every $\L$-eigenfunction on $S$ extends to a holomorphic function on $\Cr(S)$. 
\end{cor}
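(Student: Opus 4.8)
The plan is to obtain the corollary as an immediate consequence of the two theorems just established, exploiting the fact that a single judiciously chosen eigenfunction already determines the maximal domain of holomorphy. The statement asserts two things at once: that $\Cr(S)$ enjoys the extension property, and that it is the largest $S$-invariant domain containing $S$ that does so. The first assertion requires no new argument, being exactly the content of the extension theorem proved above, which guarantees that every $\L$-eigenfunction on $S$ extends holomorphically to $\Cr(S)$.

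For the maximality, I would argue as follows. Let $\Omega'$ be any $S$-invariant domain in $S_\C$ containing $S$ with the property that every $\L$-eigenfunction on $S$ extends holomorphically to $\Omega'$. The crucial observation is that, for each positive $\lambda\in\af_\C^*$, the Poisson kernel $P_\lambda$ is itself an $\L$-eigenfunction on $S$ (as recorded above, following \cite{D,ADY}). Consequently $P_\lambda$ extends holomorphically to $\Omega'$. But the maximality theorem for Poisson kernels proved above states that $\Cr(S)$ is already the largest $S$-invariant domain containing $S$ to which a positive $P_\lambda$ extends holomorphically; hence $\Omega'\subseteq\Cr(S)$. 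Combined with the first assertion, this shows that $\Cr(S)$ is the largest domain with the required property.

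The step I expect to be decisive, rather than technically hard, is the reduction of the maximality question for the entire family of $\L$-eigenfunctions to a single positive Poisson kernel. Once one recognizes that $P_\lambda$ lies in the class of eigenfunctions under consideration, its individual maximality immediately forces the maximality of $\Cr(S)$ for the whole class, and no further analytic continuation arguments are needed. The only point meriting a word of care is ensuring that $P_\lambda$ is genuinely defined and holomorphic as an $\L$-eigenfunction on all of $\Cr(S)$ (so that it really belongs to the class being tested), which is guaranteed by its construction via the simply connected domain $\Cr(S)$ and the branch of $a^\lambda$ fixed above.
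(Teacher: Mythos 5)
Your proposal is correct and is precisely the argument the paper intends: the corollary follows immediately by combining the holomorphic extension theorem with the maximality theorem for the positive Poisson kernel $P_\lambda$, which is itself an $\L$-eigenfunction. The paper in fact leaves this deduction implicit, stating the corollary without further proof.
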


\begin{cor} The geodesic symmetry extends to a holomorphic involutive map $s: \Cr(S)\to \Cr(S)$. 
\end{cor}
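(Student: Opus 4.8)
The plan is to reduce everything to the symmetric case, where the geodesic symmetry is the complexified Cartan involution, and then to propagate to a general harmonic $S$ through the totally geodesic symmetric subspaces already used in the reduction arguments. First I would treat a symmetric $S=NA$ arising from a rank one group $G$. Here the geodesic symmetry centered at $x_0$ is induced by the Cartan involution $\theta$: on $X=G/K$ one has $s(gK)=\theta(g)K$, and since $\theta$ complexifies to a holomorphic involution $\theta_\C$ of $G_\C$, it descends to a holomorphic involution of $X_\C=G_\C/K_\C$. To see that $\theta_\C$ preserves the elliptic model $\Cr(X)=G\exp(i\Omega)\cdot x_0$, I would use that $\theta(G)=G$, that $\theta|_\af=-\id$, and that $\Omega=-\Omega$ is symmetric; thus $\theta_\C(g\exp(iH)\cdot x_0)=\theta(g)\exp(-iH)\cdot x_0\in\Cr(X)$. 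Transported to the $S=NA$ picture via $S\cong X$, and using $\Cr(S)=\Cr(X)$ from Theorem~\ref{th=1}, this gives a holomorphic involution of $\Cr(S)$ restricting to $s$ on $S$; the identity $s\circ s=\id$ on $S$ then forces involutivity on the connected domain $\Cr(S)$ by uniqueness of holomorphic continuation.

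For a general harmonic $S$ I would exploit the family of totally geodesic submanifolds $S_1\subset S$ of $\SU(n,1)$-type through the identity that underlies the $\SU(2,1)$-reduction: as recorded in the proof of the maximality theorem, every $z\in\Cr(S)$ lies in some such slice $\Cr(S_1)\subset\Cr(S)$. Since $S_1$ is totally geodesic and contains $e$, the ambient geodesic symmetry $s$ restricts on $S_1$ to the intrinsic geodesic symmetry of $S_1$; by the symmetric case this restriction extends to a holomorphic involution of $\Cr(S_1)$ preserving $\Cr(S_1)$. All these slice-wise extensions continue the single real-analytic map $s\colon S\to S$, so by uniqueness of analytic continuation, which is single-valued because $\Cr(S)$ is simply connected, they assemble into a well-defined map $\tilde s\colon\Cr(S)\to\Cr(S)$. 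Because each slice is preserved, one gets the inclusion $\tilde s(\Cr(S))\subseteq\Cr(S)$ and the relation $\tilde s\circ\tilde s=\id$ for free.

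The main obstacle is holomorphicity of the assembled map on the full open domain $\Cr(S)$, as opposed to holomorphicity merely along each lower dimensional reduction slice $\Cr(S_1)$. I would address this by first extending $s$ holomorphically to an honest neighborhood $U$ of $S$ in $S_\C$, using real analyticity of $s$ on the totally real submanifold $S$, so that $\tilde s$ is genuinely holomorphic near $S$ and agrees there with the slice extensions. The slices then sweep out $\Cr(S)$ by a real-analytic family, and the hard step is to upgrade holomorphicity along the slices, together with the full holomorphicity near $S$, to joint holomorphicity on $\Cr(S)$ by a separate-analyticity argument of Hartogs type (controlling the transverse directions). Once joint holomorphicity is secured, the inclusion $\tilde s(\Cr(S))\subseteq\Cr(S)$ and the involution relation established above finish the proof, $\tilde s$ being its own holomorphic inverse.
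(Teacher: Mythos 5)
The paper states this corollary without proof, so there is nothing to compare against literally; judged on its own terms, your proposal handles the symmetric case correctly (the Cartan involution $\theta_\C$ preserves $G\exp(i\Omega)\cdot x_0$ because $\theta(G)=G$, $d\theta|_\af=-\id$ and $\Omega=-\Omega$), but the passage to general harmonic $S$ has a genuine gap. You propose to build the extension by gluing the intrinsic symmetries of the reduction slices $\Cr(S_1)$, and you yourself flag the hard step --- upgrading slice-wise holomorphy to holomorphy on the open domain --- without carrying it out. This is not a routine Hartogs argument: the slices $\Cr(S_1)$ form a real-parameter family of positive-codimension complex submanifolds (parametrized by the choice of $Z_1\in\zf$ and a $J_{Z_1}$-stable subspace of $\vf$), not complex coordinate lines filling out a product, so separate analyticity in the sense of Hartogs does not apply. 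Moreover, well-definedness is assumed rather than proved: a point $z$ generally lies on several slices, and the appeal to simple connectivity and uniqueness of analytic continuation is circular --- two slice extensions would be forced to agree at $z$ if they were both restrictions of one global holomorphic map, but that global map is precisely what you are trying to construct.

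The clean repair is to reverse the logic: produce a globally defined holomorphic candidate first, and use the slices only to check pointwise conditions. The geodesic inversion of a Damek--Ricci space has an explicit algebraic formula in the coordinates $(V,Z,a)$ (see \cite{CDKR1}, \cite{ROU}): it is rational in $a$, $V$, $Z$ and the bilinear quantities $|V|^2$, $|Z|^2$, $J_ZV$, with denominator $\bigl(a+\tfrac14|V|^2\bigr)^2+|Z|^2$. Replacing the real-bilinear forms by their complex-bilinear extensions yields a holomorphic map on the open subset of $S_\C$ where the complexified denominator is nonzero, which restricts to $s$ on $S$ and satisfies $s\circ s=\id$ by analytic continuation from $S$. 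What remains are two pointwise verifications --- nonvanishing of the denominator on $\Cr(S)$ and the containment $s(\Cr(S))\subset\Cr(S)$ --- and \emph{these} legitimately reduce to the $\SU(2,1)$ slices (where your symmetric-case argument applies), because each point of $\Cr(S)$ lies on such a slice and the formula visibly restricts to the slice's own inversion. Your slicing idea is the right tool, but only once an ambient holomorphic map exists to be restricted.
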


\end{document}